\documentclass[reqno,final]{amsart}
\usepackage{amsmath,amssymb,amsthm,amscd,verbatim}
\usepackage{color}
\usepackage{lscape}
\usepackage{fullpage}
\usepackage{stmaryrd}
\usepackage[notref,notcite,color]{showkeys}
\usepackage{epic,eepic}
\usepackage[dvips]{graphicx}
\usepackage[T1]{fontenc}

\numberwithin{equation}{section}
\usepackage{textcomp}
\usepackage{amsbsy}

\newtheorem{theorem}{Theorem}[section]

\newtheorem{lemma}[theorem]{Lemma}

\newtheorem{corollary}[theorem]{Corollary}

\theoremstyle{remark}
\newtheorem{remark}[theorem]{Remark}

\newcommand{\Image}{\mathrm{Im}}

%%%%%%%%%%%% Sasha's stuff

%%%%%%%%%%%%%Honza's vectors

\newcommand{\Ent}{{\operatorname{Ent}}}

\newcommand{\supp}{{\operatorname{supp}}}
\def\Ex {{\mathbb E}}
\def\Var {{\mathrm{Var}}}

\def\QQ{\mathcal{Q}}
\def\UU{\mathcal{U}}
\def\LL{\mathcal{L}}
\def\bsf{{\sf b}}

\newcommand{\ignore}[1]{}

\newcommand{\NN}{\mathbb{N}}

\def\II{\mathcal{I}}

\begin{document}

\title{The entropy of random-free graphons and properties}

\author{Hamed Hatami}
\address{School of Computer Science, McGill University, Montreal, Canada.}
\email{hatami@cs.mcgill.ca}
\author{Serguei Norine}
\address{Department of Mathematics \& Statistics, McGill University, Montreal,
Canada.}
\email{snorin@math.mcgill.ca}
\thanks{}

\begin{abstract}
Every graphon defines a random graph on any given number $n$ of vertices. It was known that the graphon is random-free if and only if the entropy of this random graph is subquadratic. We prove that for random-free graphons, this entropy can grow as fast as any subquadratic function.
However, if the graphon belongs to the closure of a random-free graph property, then the entropy is $O(n \log n)$. We also give a simple
construction of a non-stepfunction random-free graphon for which this entropy is linear, refuting a conjecture of Janson.

\end{abstract}

\maketitle

\section{Introduction \label{sec:intro}}
In recent years a theory of convergent sequences of dense graphs has been
developed.  One can
construct a limit object for  such a sequence in the form of certain symmetric
measurable functions called
graphons. Every graphon defines a random graph on any given number of vertices. In~\cite{HJS}
several facts about the asymptotics of the entropies of these random variables are established. These results provide a good understanding of the situation when the graphon is not ``random-free''. However in the case of the random-free graphons they completely trivialize. The purpose of this article is to study these entropies in the case of the random-free graphons.

\subsection{Preliminaries}
For every natural number $n$, denote $[n]:=\{1,\ldots, n\}$. In this paper all
graphs are simple and finite. For a graph $G$, let $V(G)$ and $E(G)$,
respectively denote the set of the vertices and the edges of $G$.
Let $\mathcal{U}$ denote set of all  graphs up to an
isomorphism. Moreover, for $n \ge 0$, let $\mathcal{U}_n \subset \mathcal{U}$
denote the set of all graphs in $\mathcal{U}$ with exactly $n$ vertices.
We will usually work with labeled graphs.  For every $n \ge 1$, denote by
$\LL_n$ the set of all graphs with vertex set
$[n]$.

The \emph{homomorphism density} of a graph $H$ in a graph $G$, denoted by
$t(H;G)$, is the probability that a random mapping $\phi:V(H) \to V(G)$ preserves adjacencies,
i.e. $uv \in E(H)$ implies $\phi(u)\phi(v) \in E(G)$.
The \emph{induced density} of a graph $H$ in a graph $G$, denoted by $p(H;G)$,
is the probability that a random \emph{embedding} of the vertices of $H$ in the
vertices of $G$ is an embedding of $H$ in $G$.

We call a sequence of finite
graphs $(G_n)_{n=1}^\infty$ \emph{convergent} if for every finite graph $H$,
the sequence $\{p(H;G_n)\}_{n=1}^\infty$ converges. It is not difficult to
construct convergent sequences $(G_n)_{n=1}^\infty$ such that their limits
cannot be recognized as graphs, i.e. there is no graph $G$, with $\lim_{n
\rightarrow \infty} p(H;G_n) = p(H;G)$ for every $H$. Thus naturally one
considers $\overline{\mathcal{U}}$,
the completion of  $\mathcal{U}$ under this notion of convergence.
It is not hard to see that $\overline{\mathcal{U}}$ is a compact
metrizable space which contains $\mathcal{U}$ as a dense subset. The elements of the
complement  $\mathcal{U}^\infty := \overline{\mathcal{U}} \setminus \mathcal{U}$
are called \emph{graph limits}. Note that a sequence of graphs $(G_n)_{n=1}^\infty$ converges
to a graph limit $\Gamma$ if and only if $|V(G_n)| \to \infty$ and $p(H;G_n) \to
p(H;\Gamma)$ for every graph $H$. Moreover, a graph limit is uniquely determined
by the numbers $p(H;\Gamma)$ for all $H \in \mathcal{U}$.

It is shown in~\cite{MR2274085} that every graph limit $\Gamma$ can be
represented by a \emph{graphon}, which is a symmetric measurable function
$W:[0,1]^2 \to [0,1]$. The set of all graphons are denoted by $\mathcal{W}_0$.
Given a graph $G$ with vertex set $[n]$, we
define the corresponding graphon $W_G: [0,1]^2 \rightarrow \{0,1\}$ as follows.
Let $W_G(x,y) :=
A_G(\lceil x n \rceil,\lceil y n \rceil)$ if $x,y \in (0,1]$, and if
$x=0$ or $y=0$, set $W_G$ to $0$. It is easy to see that if $(G_n)_{n=1}^\infty$ is a graph
sequence that converges to a graph limit $\Gamma$, then for every graph $H$,
$$ p(H;\Gamma)= \lim_{n \to \infty} \Ex\left[\prod_{uv \in E(H)}
W_{G_n}(x_u,x_v) \prod_{uv \in E(H)^c} (1-W_{G_n}(x_u,x_v))\right],$$
where $\{x_u\}_{u \in V(H)}$ are independent random variables taking values in
$[0,1]$ uniformly, and $E(H)^c=\{uv: u \neq v, uv \not\in E(H)\}$. Lov\'asz and
Szegedy~\cite{MR2274085}
showed that for every graph limit $\Gamma$, there exists a graphon $W$ such that
for every graph $H$, we have $p(H;\Gamma)= p(H;W)$ where
$$p(H;W):= \Ex\left[\prod_{uv \in E(H)} W(x_u,x_v) \prod_{uv
\in E(H)^c} (1-W(x_u,x_v))\right].$$
Furthermore, this graphon is unique in the following sense: If $W_1$ and $W_2$
are two different graphons representing the same graph limit, then there exists
a measure-preserving map $\sigma:[0,1] \to [0,1]$ such that
\begin{equation}
 \label{eq:graphon_uniquenss}
W_1(x,y) = W_2(\sigma(x),\sigma(y)),
\end{equation}
almost everywhere~\cite{MR2594615}. With these considerations, sometimes we shall not
distinguish between the graph limits and their corresponding graphons. We define the $\delta_1$ distance of two graphons
$W_1$ and $W_2$ as
$$\delta_1(W_1,W_2) = \inf \|W_1-W_2 \circ \sigma\|_1,$$
where the infimum is over all measure-preserving maps $\sigma:[0,1] \to [0,1]$.

A graphon $W$ is called a \emph{stepfunction}, if there is a partition of $[0,1]$ into a finite number of
measurable sets $S_1,\ldots,S_n$ so that $W$ is constant on every $S_i \times S_j$. The partition classes will be
called the \emph{steps} of $W$.

Let $W$ be a graphon and $x_1,\ldots,x_n \in [0,1]$. The random graph $G(x_1,\ldots,x_n,W) \in \LL_{n}$
is obtained by including the edge $ij$ with probability $W(x_i,x_j)$, independently for all pairs $(i,j)$
with $1 \le i < j \le n$. By picking $x_1,\ldots,x_n$ independently and uniformly at random from $[0,1]$, we obtain the random graph
$G(n,W) \in \LL_{n}$. Note that that for every $H \in \LL_n$,
$$\Pr[G(n,W) = H] = p(H;W).$$

\subsection{Graph properties and Entropy\label{sec:mainI}}

A subset of the set $\mathcal{U}$ is called a \emph{graph class}. Similarly a
\emph{graph property} is a property of graphs that is invariant under graph
isomorphisms. There is an obvious one-to-one correspondence between graph
classes and graph properties and we will not distinguish between a graph
property and the corresponding class. Let $\QQ \subseteq \mathcal{U}$ be
a graph class. For every $n>1$, we denote by $\QQ_n$ the set of graphs in $\QQ$
with exactly $n$ vertices. We let $\overline{\QQ} \subseteq \overline{\UU}$ be
the closure of $\QQ$ in $\overline{\UU}$.

Define the \emph{binary entropy} function $h:[0,1] \mapsto \mathbb{R}_+$ as
$h(x)=-x \log(x) - (1-x) \log(1-x)$ for $x \in (0,1)$ and $h(0)=h(1)=0$ so that
$h$ is continuous on $[0,1]$ where here and throughout the paper $\log(\cdot)$
denotes the logarithm to the base $2$. The \emph{entropy} of a graphon $W$ is defined as
$$\Ent(W) := \int_0^1 \int_0^1  h(W(x,y)) dx dy.$$
Note that it follows from the uniqueness result (\ref{eq:graphon_uniquenss})
that  entropy is a function of the underling graph limit, and
it does not depend on the choice of the graphon representing it.  It is shown
in~\cite{MR883646} and~\cite[Theorem D.5]{Janson} that
\begin{equation}
\label{eq:EntGraphGraphon}
\lim_{n \to \infty} \frac{\Ent(G(n,W))}{{n \choose 2}}  = \Ent(W).
\end{equation}
A graphon is called \emph{random-free} if it is $\{0,1\}$-valued almost
everywhere. Note that a graphon $W$ is random-free if and only if $\Ent(W)=0$,
which by (\ref{eq:EntGraphGraphon}) is
equivalent to $\Ent(G(n,W))=o(n^2)$. Our first theorem shows that this is sharp
in the sense that the growth of $\Ent(G(n,W))$ for random-free graphons $W$ can be arbitrarily close to quadratic.

\begin{theorem}
\label{thm:rfreelargeEnt}
Let $\alpha: \mathbb{N} \to \mathbb{R}_{+}$ be a function with $\lim_{n \to \infty} \alpha(n)=0$. Then there exists a random-free
graphon $W$ such that $\Ent(G(n,W))=\Omega(\alpha(n)n^2)$.
\end{theorem}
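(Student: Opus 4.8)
The plan is to realize $W$ as a ``block-diagonal'' $\{0,1\}$-valued graphon. Partition $[0,1)$ into intervals $I_1,I_2,\dots$ with $|I_k|=2^{-k}$, set $W\equiv 0$ off $\bigcup_k (I_k\times I_k)$, and on each square $I_k\times I_k$ let $W$ be a rescaled copy of $W_{F_k}$ for a suitably chosen finite graph $F_k$. No matter how the $F_k$ are chosen, the resulting $W$ is a symmetric measurable $\{0,1\}$-valued function, hence random-free; so $\Ent(W)=0$ and $\Ent(G(n,W))=o(n^2)$ come for free by \refeq{eq:EntGraphGraphon}, and the entire task is to pick the $F_k$, together with the rate at which $|V(F_k)|$ grows, so that $\Ent(G(n,W))=\Omega(\alpha(n)n^2)$.

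The combinatorial heart of the argument is the following building block: \emph{there are absolute constants $C_0$ and $c>0$ such that for every $N$ there exists a graph $F$ on $N$ vertices with $\Ent\big(G(m,W_F)\big)\ge c\,m^2$ for all $m$ with $C_0\le m\le \log\log\log N$.} I would prove this by taking $F$ to be a sample of $G(N,\tfrac12)$. When $m\ll\sqrt N$, with probability $1-o(1)$ the $m$ sample points defining $G(m,W_F)$ fall into distinct steps, and then $G(m,W_F)$ is exactly the pullback of $F$ along a uniformly random injection $[m]\hookrightarrow V(F)$. A routine second-moment computation shows that for $m\le\log\log\log N$ the number of ordered distinct $m$-tuples of vertices of $G(N,\tfrac12)$ inducing any prescribed labelled graph on $[m]$ is, with probability $1-o(1)$, within a factor $2$ of its mean; a union bound over the $2^{\binom m2}$ labelled graphs on $[m]$ is affordable precisely because $m$ is so small relative to $N$. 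Hence for almost every $F$ no graph in $\LL_m$ is produced by $G(m,W_F)$ with probability more than $O\big(2^{-\binom m2}\big)$, so $\Ent(G(m,W_F))\ge\binom m2-O(1)\ge c\,m^2$ once $m\ge C_0$. Making this second-moment/union-bound step fully rigorous is the main technical point, but the enormous gap between $m$ and $N$ keeps it from being delicate.

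Finally, the assembly. After replacing $\alpha$ by $n\mapsto\sup_{n'\ge n}\alpha(n')$ we may assume $\alpha$ is non-increasing. Choose integers $R_0<R_1<R_2<\cdots\to\infty$ recursively so that for every $k\ge 1$ one has $\alpha(n)\le 4^{-k}/100$ and $n\ge 2^{k+1}C_0$ for all $n\ge R_{k-1}$ (possible since $\alpha\to0$), and then pick $N_k$ large enough that $\log\log\log N_k\ge 2^{-k}R_k$; let $F_k$ be the graph on $N_k$ vertices furnished by the building block and build $W$ from the $F_k$ as above. Fix $n\ge R_0$ and let $k$ be the index with $R_{k-1}\le n<R_k$. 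Condition on the random partition $S=(S_1,S_2,\dots)$ of $[n]$ recording, for each of the $n$ sample points of $G(n,W)$, which interval $I_j$ it lands in. Since $W$ vanishes off the diagonal blocks, $G(n,W)$ is, conditionally on $S$, the disjoint union of independent copies of the $G(|S_j|,W_{F_j})$, so
\begin{equation*}
\Ent\big(G(n,W)\big)\;\ge\;H\big(G(n,W)\mid S\big)\;=\;\sum_j\Ex\,\Ent\big(G(|S_j|,W_{F_j})\big)\;\ge\;\Ex\,\Ent\big(G(|S_k|,W_{F_k})\big).
\end{equation*}
Now $|S_k|\sim\mathrm{Bin}(n,2^{-k})$, so by a Chernoff bound $|S_k|\ge\lceil 2^{-k-1}n\rceil\ge C_0$ with probability at least $\tfrac12$; on that event, using that $\Ent(G(\cdot,W_{F_k}))$ is non-decreasing in the number of vertices (since $G(m,W_{F_k})$ is an induced subgraph of $G(m',W_{F_k})$ for $m\le m'$) and that $\lceil 2^{-k-1}n\rceil\le 2^{-k}R_k\le\log\log\log N_k$, the building block gives $\Ent(G(|S_k|,W_{F_k}))\ge c\,(2^{-k-1}n)^2$. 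Combining, $\Ent(G(n,W))=\Omega(4^{-k}n^2)=\Omega(\alpha(n)n^2)$, since $4^{-k}\ge 100\,\alpha(n)$ whenever $n\ge R_{k-1}$; this holds for all $n\ge R_0$, which is the required estimate.
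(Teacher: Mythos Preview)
Your argument is correct and takes a genuinely different route from the paper's proof. The paper builds a single explicit \emph{transversal-uniform} graphon: the interval sizes themselves are tuned to $\alpha$, and the entropy comes from the \emph{cross-block} structure --- whenever the $n$ sample points land in many distinct intervals $I_{k_1},\dots,I_{k_s}$, the induced graph on a set of representatives is exactly uniform over $\LL_s$ (this is equation~\eqref{eq:nextStep2}), so the whole work reduces to a second-moment estimate on the number of distinct intervals hit. By contrast, you fix the block sizes $|I_k|=2^{-k}$ once and for all, make $W$ block-diagonal, and extract all the entropy from a \emph{single} block $I_k\times I_k$: the block graph $F_k$ is a random graph on a tower-exponentially large vertex set, and a second-moment/union-bound over all $H\in\LL_m$ (easily affordable for $m\le\log\log\log N_k$) shows that $G(m,W_{F_k})$ is close to uniform on $\LL_m$. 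What each approach buys: the paper's construction is fully explicit and the uniformity statement~\eqref{eq:nextStep2} is exact rather than approximate, so no probabilistic existence step is needed; your construction is more modular (the block-diagonal shape makes the conditional-entropy decomposition $\Ent(G(n,W)\mid S)=\sum_j\Ex\,\Ent(G(|S_j|,W_{F_j}))$ immediate) and lets you reuse the standard fact that induced-subgraph statistics of $G(N,\tfrac12)$ are well-concentrated, at the cost of a non-constructive building block and a triply-exponential gap between $m$ and $N_k$ that is far larger than necessary.
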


A graph property $\QQ$ is called \emph{random-free} if every $W \in
\overline{\QQ}$ is
random-free. Our next theorem shows that in contrast to
Theorem~\ref{thm:rfreelargeEnt}, when a graphon $W$ is the limit of a sequence of graphs with a random-free property, then
$\Ent(G(n,W))$ cannot grow faster than $O(n\log n)$.

\begin{theorem}\label{t:EntRandFree}
Let $\QQ$ be a random-free property, and let $W$ be the limit of a sequence of
graphs in $\QQ$. Then
$\Ent(G(n,W)) = O(n \log n)$.
\end{theorem}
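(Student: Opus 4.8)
The plan is to bound $\Ent(G(n,W))$ by the logarithm of the number of labeled graphs that can occur as $G(n,W)$, and then to bound that number using that $\QQ$ is a hereditary random-free property. For the first step, since $\Pr[G(n,W)=H]=p(H;W)$ for $H\in\LL_n$, we have $\Ent(G(n,W))=\sum_{H\in\LL_n}-p(H;W)\log p(H;W)\le\log|\{H\in\LL_n:p(H;W)>0\}|$. Write $W$ as the limit of a sequence $G_1,G_2,\dots$ of graphs of $\QQ$ with $|V(G_m)|\to\infty$. If $p(H;W)>0$, then $p(H;G_m)\to p(H;W)>0$, so $p(H;G_m)>0$ for all large $m$; thus $H$ is isomorphic to an induced subgraph of $G_m\in\QQ$, and hence, since $\QQ$ is hereditary, $H$ is isomorphic to a member of $\QQ_n$. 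Therefore $|\{H\in\LL_n:p(H;W)>0\}|\le n!\,|\QQ_n|$, and as $\log(n!)=O(n\log n)$, it suffices to prove $|\QQ_n|=2^{O(n\log n)}$.

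The second, and main, step is that a hereditary random-free property $\QQ$ has $|\QQ_n|=2^{O(n\log n)}$. The point is that such a $\QQ$ must omit all sufficiently large ``shattering gadgets''. For $k\ge1$ let $B_k$ be the bipartite graph with vertex classes $\{v_1,\dots,v_k\}$ and $\{u_S:S\subseteq[k]\}$ with $u_S\sim v_i$ exactly when $i\in S$. If $B_k\in\QQ$ for every $k$, then for every $n$ the property $\QQ$ contains every bipartite graph on $n+n$ vertices whose second class has pairwise distinct neighborhoods in the first class, because mapping such a vertex to the vertex $u_S$ indexed by its neighborhood $S$ is an induced embedding into $B_n$; taking these graphs uniformly at random, a sequence of them converges to the bipartite quasirandom graphon, which equals $\tfrac12$ on a set of positive measure and hence is not random-free, contradicting the random-freeness of $\QQ$. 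Applying the same argument to the complements $\overline{B_k}$ and to the ``split'' variants $S_k$ (turn $\{v_1,\dots,v_k\}$ into a clique) shows that there is a fixed $d=d(\QQ)$ with $B_d,\overline{B_d},S_d\notin\QQ$; equivalently, $\QQ$ has coloring number at most $1$, and the members of $\QQ$ have uniformly bounded VC dimension. Now the classification of the possible growth rates of hereditary graph properties --- in particular the ``penultimate range'' theorem that a hereditary property of coloring number at most $1$ (equivalently, of speed $2^{o(n^2)}$) in fact has speed $2^{O(n\log n)}$ (Alekseev; Bollob\'as--Thomason; Balogh--Bollob\'as--Weinreich) --- gives $|\QQ_n|=2^{O(n\log n)}$. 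Combined with the first step, this yields $\Ent(G(n,W))=O(n\log n)$.

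I expect the quantitative content of the second step to be the real obstacle. The reduction in the first step is routine, and omitting the gadgets $B_k,\overline{B_k},S_k$ is a short Ramsey-flavored argument; but upgrading ``$\QQ$ omits these gadgets'' (equivalently, its members have bounded VC dimension) to the sharp count $|\QQ_n|=2^{O(n\log n)}$ genuinely requires the structural analysis behind the penultimate-range theorem --- mere compactness of $\overline{\QQ}$, or a weak-regularity argument, recovers only the already known bound $|\QQ_n|=2^{o(n^2)}$, which is far too weak here. This step is also where hereditarity is indispensable: by Theorem~\ref{thm:rfreelargeEnt} there are random-free graphons $W$ --- necessarily of unbounded VC dimension --- that arise as limits of (non-hereditary) sequences of graphs and yet have $\Ent(G(n,W))$ as large as $n^2/\log n$, so no analogue of Theorem~\ref{t:EntRandFree} can hold without the hereditarity hypothesis.
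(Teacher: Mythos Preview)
Your approach has a fatal gap: the claim that a hereditary random-free property $\QQ$ satisfies $|\QQ_n|=2^{O(n\log n)}$ is simply false, and so the support bound $\Ent(G(n,W))\le\log|\supp(G(n,W))|$ is far too crude. Take $\QQ$ to be the $K_{t,t}$-free graphs for any fixed $t\ge 2$. This class is hereditary; by K\H{o}v\'ari--S\'os--Tur\'an its members have edge density $o(1)$, so the only graph limit in $\overline{\QQ}$ is the zero graphon and $\QQ$ is random-free; yet $|\QQ_n|\ge 2^{n^{2-2/t}}$. The ``penultimate range'' statement you invoke is not a theorem: colouring number $1$ (equivalently speed $2^{o(n^2)}$) does \emph{not} force speed $2^{O(n\log n)}$, and bounded VC dimension---which $K_{t,t}$-free graphs certainly have---does not either. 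In fact the paper's own concluding remarks construct, for every $\epsilon>0$, a hereditary random-free property $\QQ'$ and a graphon $W\in\overline{\QQ'}$ with $\log|\supp(G(n,W))|=\Omega(n^{2-\epsilon})$, and explicitly note that $G(n,W)$ is therefore highly non-uniform on its support; this rules out your entire strategy. (A lesser issue: the theorem does not assume $\QQ$ hereditary, and you invoke hereditarity without justification.)

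The paper's proof works on the graphon $W$ directly rather than on $\QQ$. The Lov\'asz--Szegedy characterization supplies a fixed bigraph $H$ with $p^{\bsf}(H;W)=0$; the Alon--Fischer--Newman lemma then gives, for each $n$, a $\{0,1\}$-valued stepfunction $W'$ with only $n^{O(1)}$ steps and $\delta_1(W,W')\le n^{-5}$. Two easy estimates show $|\Ent(G(n,W))-\Ent(G(n,W'))|=o(1)$, and $|\supp(G(n,W'))|=n^{O(n)}$ since each sampled vertex lands in one of $n^{O(1)}$ steps, giving $\Ent(G(n,W'))=O(n\log n)$. The key point is that $W$ is $\delta_1$-close to a stepfunction with polynomially many steps---this controls the entropy of $G(n,W)$ without controlling its support.
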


\begin{remark}
We defined $G(n,W)$ as a \emph{labeled} graph in $\LL_n$. Both Theorems~\ref{thm:rfreelargeEnt} and~\ref{t:EntRandFree} remain valid if we consider
the random variable $G_u(n,W)$ taking values in $\UU_n$ obtained from $G(n,W)$ by forgetting the labels. Indeed,
$ \Ent(G_u(n,W)) = \Ent(G(n,W)) - \Ent(G(n,W)\: |\: G_u(n,W))$ and $\Ent(G(n,W) \: | \: G_u(n,W)=H) = O(n\log n)$ for every $H \in \UU_n$. It follows that
$$\Ent(G(n,W)) - O(n \log n) \leq \Ent(G_u(n,W)) \leq \Ent(G(n,W)).$$

\end{remark}

\section{Proof of Theorem~\ref{thm:rfreelargeEnt}}
For every positive integer $m$, let $F_{m}$ denote the unique bigraph $([m],[2^m],E)$ with the property that the vertices in $[2^m]$ all have different sets of neighbors.
The \emph{transversal-uniform} graph is the unique graph (up to an isomorphism) with vertex set $\mathbb{N}$ which satisfies the following property.
The vertices are partitioned into sets $\{A_i\}_{i=1}^\infty$ with  $\log |A_i|=\sum_{j=1}^{i-1} |A_{i-1}|$. There are no edges inside $A_i$'s, and for every $i$, the bigraph induced by $(\cup_{j=1}^{i-1} A_j, A_i)$ is isomorphic to $F_{\sum_{j=1}^{i-1} |A_j|}$.

Let $\II=\{I_i\}_{i \in \NN}$ be a partition of $[0,1]$ into intervals. We define its corresponding \emph{transversal-uniform} graphon $W_\II$  by assigning weights $|I_i|/|A_i|$ to all the vertices in $A_i$ in the  transversal-uniform graph $G_{U}$ described above. More precisely, we partition each $I_i$ into $|A_i|$ equal size intervals (corresponded with elements in $A_i$), and mapping all the points in each  of these subintervals to its corresponding vertex in $A_i$. This  measurable surjection $\pi_\II:[0,1] \to \mathbb{N}$, together with the transversal-uniform graph described above defines the transversal-uniform graphon $W_\II$ by setting
$$W_\II(x,y)=\begin{cases}1 & \text{if $\pi(x)\pi(y) \in E(G_U)$,}
\\
0 &\text{if $\pi(x)\pi(y) \not \in E(G_U)$.} \end{cases}$$
Note that by construction $W_\II$ has the following property. Let $s<k$ be positive integers, and $x_1,\ldots,x_s \in \cup_{i<k} I_i$  belong to pairwise distinct intervals in $\II$. For every $f:[s] \to \{0,1\}$, we have
$$
\Pr[ \forall i, \ W_\II(x_i,y)=f(i) \: | \: y \in I_{k} ] = \frac{1}{2^{s}},
$$
where $y$ is a random variable taking values uniformly in $[0,1]$.
It follows that for every graph $H$ on $s$ vertices,
\begin{equation}\label{eq:nextStep2}
\Pr[ G(x_1,\ldots,x_s, W_\II) = H \: | \: \forall i, \ x_i \in I_{k_i} ] = \frac{1}{2^{s \choose 2}},
\end{equation}
where $x_1,\ldots,x_s$ are now i.i.d. random variables taking values uniformly in $[0,1]$, and $k_1,k_2,\ldots, k_s$ are distinct natural numbers.

%
% Let $\II=\{I_i\}_{i \in \NN}$ be a partition of $[0,1]$ into intervals. We define a \emph{transversal-uniform} graphon $W=W_{\II}$ as follows.  Let $J_k = \cup_{i=1}^{k}I_i$. We recursively define a stepfunction graphon $W |_{J_k \times J_k}$. Suppose that $W|_{J_{k-1} \times J_{k-1}}$ is defined, and it is a step function graphon with steps $I'_{1},I'_{2}, \ldots,I'_{\ell}$. Suppose further that the partition $(I'_i)_{1 \le i \le \ell}$ refines the partition $(I_i)_{1 \le i \le k-1}$.  We divide $I_k$ into $2^{\ell}$ equal size intervals indexed by functions $[\ell] \rightarrow \{0,1\}$, that is $I_k = \cup_{f: [\ell] \rightarrow \{0,1\}} I^f_k.$ Finally, we set $W(x,y)=0$ if $x,y \in I_k$ and $W(x,y)=f(i)$ if $x \in I_i$ and $y \in I^f_{k}$.
%
% Note that by construction $W|_{J_k \times J_k}$ has the following property. Let $x_1,x_2,\ldots,x_{s} \in J_{k-1}$ belong to pairwise distinct intervals in $\II$, and let $f:[s] \to \{0,1\}$ be an arbitrary function. Then
% \begin{equation}\label{eq:nextStep}
% \Pr[ \forall i \in [s] (W_\II(x_i,y)=f(i)) \: | \: y \in I_k ] = \frac{1}{2^{s}}.
% \end{equation}
We translate (\ref{eq:nextStep2}) into a lower bound on (conditional) entropy of transversal-uniform graphons. First we need a simple lemma.

\begin{lemma}\label{lem:transUniform} Let  $W_{\II}$ be a transversal-uniform graphon, and $\phi:[n] \to [0,1]$ be a uniformly random map.
For every $\rho:[n] \to \mathbb{N}$, we have
$$\Ent(G(\phi(1),\ldots,\phi(n),W_{\II}) \: | \pi_\II \circ \phi = \rho) \geq \binom{|\Image(\rho)|}{2}.$$
\end{lemma}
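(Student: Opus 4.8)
The plan is to deduce the estimate from the uniformity property \eqref{eq:nextStep2} after discarding all but one point from each fibre of $\rho$. Since $W_\II$ is $\{0,1\}$-valued, the random graph $G(\phi(1),\dots,\phi(n),W_\II)$ is a deterministic function of $\phi$; write $G$ for it, and write $\mathcal E$ for the event $\{\pi_\II\circ\phi=\rho\}$ on which we condition.

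First I would record the elementary fact that conditional entropy cannot increase under a deterministic map: if $Y=\Psi(X)$ then $\Ent(Y\mid\mathcal E)\le\Ent(X\mid\mathcal E)$. Set $m:=|\Image(\rho)|$ and fix a subset $S=\{j_1,\dots,j_m\}\subseteq[n]$ meeting every fibre of $\rho$ in exactly one element, so that $\rho$ restricted to $S$ is injective. The random graph $G':=G(\phi(j_1),\dots,\phi(j_m),W_\II)$ on vertex set $[m]$ is exactly the subgraph of $G$ induced on $\{j_1,\dots,j_m\}$, with $j_a$ relabelled to $a$; in particular $G'=\Psi(G)$ for a fixed map $\Psi$, and hence $\Ent(G\mid\mathcal E)\ge\Ent(G'\mid\mathcal E)$.

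It remains to check that $\Ent(G'\mid\mathcal E)=\binom m2$. The coordinates $\phi(1),\dots,\phi(n)$ of $\phi$ are independent and $\mathcal E$ is an intersection of events, each depending on a single coordinate; therefore, conditionally on $\mathcal E$, the tuple $(\phi(j_1),\dots,\phi(j_m))$ is distributed as $m$ independent uniform points, the $a$-th one conditioned to lie in a fixed interval $I_{k_a}$ of $\II$, the indices $k_1,\dots,k_m$ being pairwise distinct because $\rho|_S$ is injective. Now \eqref{eq:nextStep2} applies verbatim and shows that, conditionally on $\mathcal E$, the graph $G'$ is uniformly distributed over the $2^{\binom m2}$ graphs on $[m]$. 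Hence $\Ent(G'\mid\mathcal E)=\log 2^{\binom m2}=\binom m2=\binom{|\Image(\rho)|}{2}$, and combining this with the inequality of the previous paragraph finishes the proof.

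The only step that needs care is the passage to \eqref{eq:nextStep2}: one has to be sure that the information in $\mathcal E$ does not over-constrain the points $\phi(j_1),\dots,\phi(j_m)$ beyond pinning down which interval of $\II$ each of them lies in, and this is precisely what independence of the coordinates of $\phi$ delivers, together with the observations that $G'$ depends on $\phi$ only through $\phi(j_1),\dots,\phi(j_m)$ and that these representatives were chosen so as to land in pairwise distinct intervals. The rest is bookkeeping with standard properties of entropy.
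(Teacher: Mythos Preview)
Your proof is correct and follows the same approach as the paper: choose one representative from each fibre of $\rho$, apply \eqref{eq:nextStep2} to see that the induced subgraph on those representatives is uniform over $2^{\binom{m}{2}}$ graphs, and use that passing to an induced subgraph is a deterministic map so entropy can only drop. You spell out a few points the paper leaves implicit (the independence argument justifying that $\mathcal E$ restricts each $\phi(j_a)$ only to its interval, and the monotonicity of entropy under deterministic maps), but the structure is identical.
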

\begin{proof}
Pick a set  of representatives $K \subseteq [n]$ so that $\rho|_K:K \to \Image(\rho)$ is a bijection. Equation (\ref{eq:nextStep2}) implies that for every graph $H$ with $V(H)=K$,
\begin{equation*}\label{eq:uniformity}
\Pr[G(\phi(1),\ldots,\phi(n),W_{\II})[K]=H \: | \:  \pi_\II \circ \phi = \rho ]=\frac{1}{2^{\binom{|\Image(\rho)|}{2}}}.
\end{equation*}
Therefore,
$$\Ent(G(\phi(1),\ldots,\phi(n),W_{\II})  \: | \: \pi_\II \circ \phi = \rho ) \geq
\Ent(G(\phi(1),\ldots,\phi(n),W_{\II})[K] \: | \: \pi_\II \circ \phi = \rho ) = \binom{|\Image(\rho)|}{2}.$$
\end{proof}

In the proof of Theorem~\ref{thm:rfreelargeEnt} below we will make use of the following well-known  inequality about conditional entropy. For discrete random variables $X$ and $Y$,
\begin{equation}\label{eq:condEntropy}
\Ent(X \: | \: Y) := \sum_{y \in \supp(Y)} \Pr[Y=y]\Ent(X \: | \: Y=y) \leq \Ent(X).
\end{equation}

\begin{proof}[Proof of Theorem~\ref{thm:rfreelargeEnt}.]
For every positive integer $k$, define
$$g_k:=  \max \left\{ \{2^{k+5}\} \cup \{n  \: | \: \alpha(n) > 2^{-2k-9} \right\}\}.$$
The numbers $g_k$ are well-defined, as the condition $\lim_{n \to \infty}\alpha(n)=0$ implies that the set  $\{n \: | \:\alpha(n) > 2^{-2k-9}\}$ is finite. Define the sums $G_k := \sum_{i=1}^k g_k$, and set $\beta_i  = \frac{1}{g_k2^k}$ for all the $g_k$ indices $i \in \big(G_{k-1},G_k\big]$. Let $\II=\{I_i\}_{i \in \NN}$
be a partition of $[0,1]$ into intervals with $|I_i|=\beta_i$, and let $W_{\II}$ be the corresponding transversal-uniform graphon.

Consider a sufficiently large $n \in \NN$, and let $k \in \NN$ be chosen to be maximum so that $2^{k+4} \leq n$ and  $\alpha(n) \leq 2^{-2k-7}$.
We have $n < 2^{k+5}$ or $\alpha(n) > 2^{-2k-9}$. Therefore  $n \leq g_k$ by the definition of $g_k$. Let $\phi:[n] \to [0,1]$ be random and uniform. By Lemma~\ref{lem:transUniform}, for any fixed $\rho:[n]\to \mathbb{N}$, we have
$$\Ent(G(\phi(1),\ldots,\phi(n),W_\II) | \pi_\II \circ \phi = \rho) \ge {|\Image(\rho)| \choose 2}.$$
Thus
\begin{equation}
\label{eq:rhophi}
\Ent(G(n,W_\II))  \ge \Ent(G(n,W_\II) | \pi_\II \circ \phi ) \ge \Pr\left[|\Image(\pi_\II \circ \phi )| \ge n2^{-k-2} \right] {n2^{-k-2} \choose 2}.
\end{equation}
Define the random variable $X:=|\Image(\pi_\II \circ \phi) \cap (G_{k-1},G_k]| \le |\Image(\pi_\II \circ \phi )|$. We have
$$\Ex[X]= \sum_{i \in (G_{k-1},G_k]} \Pr[\phi^{-1}(I_i) \neq \emptyset] =\sum_{i \in (G_{k-1},G_k]} (1-(1-\beta_i)^n)= g_k \left(1-\left(1-\frac{1}{g_k2^k}\right)^n\right)
\ge n2^{-k-1},$$
where we used the fact that $g_k2^k\ge 2n$ and that $(1-x)^n \le 1-nx+n^2x^2 \le 1-nx/2$ for $x \in [0,1/2n]$. As
the events $\phi^{-1}(I_i) \neq \emptyset$ and $\phi^{-1}(I_j) \neq \emptyset$ are negatively correlated for $i \neq j$, we have $\Var[X] \le \Ex[X]$.
Hence by Chebyshev's inequality
\begin{align*}
\Pr\left[|\Image(\pi_\II \circ \phi )| \ge n2^{-k-2} \right] &\ge \Pr\left[X \ge n2^{-k-2} \right] \ge 1-
\Pr\left[|X -\Ex[X]| \ge \frac{\Ex[X]}{2} \right] \\ 
&\ge 1- \frac{4 \Var[X]}{\Ex[X]^2} \ge 1 -\frac{4 }{n2^{-k-2}} \ge \frac{1}{2}.
\end{align*}
Substituting in (\ref{eq:rhophi}) we obtain
$$ \Ent(G(n,W_\II))  \ge  \frac{1}{2} {n2^{-k-2} \choose 2} \ge n^2 2^{-2k-7} \ge \alpha(n) n^2,$$
as desired.
\end{proof}

\section{Proof of Theorem~\ref{t:EntRandFree}}
In~\cite{LovaszSzegedyTop} Lov\'asz and Szegedy obtained a combinatorial
characterization of random-free graph properties. To state this result it is convenient
to distinguish between bipartite graphs and bigraphs. A \emph{bipartite}
graph is a graph $(V, E)$ whose node set has a partition into two classes such that all edges connect
nodes in different classes. A \emph{bigraph} is a triple $(U_1, U_2, E)$ where $U_1$ and $U_2$ are finite sets and
$E \subseteq U_1 \times  U_2$. So a bipartite graph becomes a bigraph if we fix a bipartition and specify which
bipartition class is first and second. On the other hand, if $F = (V, E)$ is a graph, then $(V, V, E')$
is an associated bigraph, where $E' = \{(x, y) : xy \in E\}$.

If $G = (V, E)$ is a graph, then an induced sub-bigraph of $G$ is determined by two (not necessarily disjoin) subsets
$S,T \subseteq V$, and its edge set consists of those pairs $(x, y) \in S \times T$ for which $xy \in E$ (so this is an
induced subgraph of the bigraph associated with $G$).

For a bigraph $H=(U_1,U_2,E)$ and a graphon $W$,
analogous to the definition of the induced density of a graph in a graphon, we define
$$p^{\bsf}(H;W) = \Ex\left[\prod_{\substack{u \in U_1,\: v  \in U_2\\ uv \in E}} W(x_u,y_v) \prod_{\substack{u \in U_1,\: v  \in U_2\\ uv
\in (U_1 \times U_2) \setminus E}} (1-W(x_u,y_v))\right],$$
where $\{x_u\}_{u \in U_1}, \{y_v\}_{v \in U_2}$ are independent random variables taking values in $[0,1]$ uniformly.
Now we are ready to state Lov\'asz and Szegedy's characterization of random-free graph properties.

\begin{theorem}\cite{LovaszSzegedyTop}\label{LS-Hfree}
A graph property $\QQ$ is random-free if and only if there exists a bigraph $H$ such that
$p^{\bsf}(H;W)=0$ for all $W \in \overline{\QQ}$.
\end{theorem}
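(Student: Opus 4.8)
Although this is quoted from Lov\'asz and Szegedy, here is the line of attack I would take. The direction ``$\Leftarrow$'' is the quick one: first I would establish that \emph{if a graphon $W$ is not random-free, then $p^{\bsf}(H;W)>0$ for every bigraph $H$}. This is a Lebesgue-density argument. Since $W$ is not random-free, for some $\epsilon>0$ the symmetric set $A=\{(x,y):\epsilon\le W(x,y)\le 1-\epsilon\}$ has positive measure and hence a point of density $1$, so for any prescribed $\eta>0$ there is a square $I\times J\subseteq[0,1]^2$ with $\lambda\big((I\times J)\setminus A\big)\le\eta\,\lambda(I\times J)$, where $\lambda$ is Lebesgue measure. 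Given $H=(U_1,U_2,E)$ with $s=|U_1|$, $t=|U_2|$, fix $\eta<\tfrac1{2st}$: when the $x_u$ and $y_v$ are sampled independently and uniformly, with positive probability every $x_u$ lies in $I$ and every $y_v$ in $J$, and conditionally the $st$ pairs $(x_u,y_v)$ are uniform in $I\times J$, so a union bound places all of them in $A$ with conditional probability above $\tfrac12$; on that event every factor of the product defining $p^{\bsf}(H;W)$ is at least $\epsilon$, hence $p^{\bsf}(H;W)\ge\tfrac12\,\lambda(I)^{s}\lambda(J)^{t}\epsilon^{st}>0$. Granting this, ``$\Leftarrow$'' follows at once: if a bigraph $H$ has $p^{\bsf}(H;W)=0$ for all $W\in\overline{\QQ}$, then no $W\in\overline{\QQ}$ can fail to be random-free, so $\QQ$ is random-free.

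For ``$\Rightarrow$'' I would assume $\QQ$ random-free, so $\overline{\QQ}$ is compact and every member is $\{0,1\}$-valued a.e., and try to produce a single $m$ with $p^{\bsf}(F_m;W)=0$ for all $W\in\overline{\QQ}$, whence $H=F_m$ works. Suppose not: for each $m$ some $W_m\in\overline{\QQ}$ has $p^{\bsf}(F_m;W_m)>0$, and since $\QQ$ is dense in $\overline{\QQ}$ and $\{W:p^{\bsf}(F_m;W)>0\}$ is open, there is a finite graph $G_m\in\QQ$ containing an induced copy of $F_m$. I would then aim for a contradiction by exhibiting a non-$\{0,1\}$-valued graphon in $\overline{\QQ}$: inside $G_m$ pass to the induced subgraph on an $F_m$-copy together with one representative of each of its neighbourhood classes, clean the edges within the two parts by Ramsey's theorem while keeping the shattering rich, observe that unboundedly rich shattered sets in these cleaned graphs force $\QQ$ to contain (as induced subgraphs) all bipartite graphs on arbitrarily large vertex sets, and take a compactness limit of such graphs; that limit lies in $\overline{\QQ}$ and carries a block with all the subgraph statistics of a quasirandom bipartite graph of some density $d\in(0,1)$, so on that block it must equal the constant graphon $d$, contradicting that it is $\{0,1\}$-valued.

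The main obstacle, I expect, is exactly this last step. An induced $F_m$-copy gives no control over the edges inside its two parts, so the Ramsey cleanup has to be carried out without destroying the richness of the shattering, and the resulting unbounded ``bipartite content'' of $\QQ$ then has to be converted into a single non-random-free limit in $\overline{\QQ}$ — which is where one must arrange the quantifiers so that one value of $m$ works uniformly over the compact set $\overline{\QQ}$, and where one genuinely uses that being $\{0,1\}$-valued a.e.\ is incompatible with carrying the subgraph statistics of a quasirandom graph of intermediate density.
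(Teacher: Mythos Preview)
The paper does not supply its own proof of this theorem; it is quoted from \cite{LovaszSzegedyTop} and used as a black box in the proof of Theorem~\ref{t:EntRandFree}. So there is nothing in the paper to compare your argument against, and what follows is an assessment of your outline on its own merits.

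Your ``$\Leftarrow$'' direction is correct and cleanly written: the Lebesgue density argument showing that a non-random-free $W$ has $p^{\bsf}(H;W)>0$ for every bigraph $H$ is exactly the right tool, and the quantitative bound $p^{\bsf}(H;W)\ge\tfrac12\,\lambda(I)^{s}\lambda(J)^{t}\epsilon^{st}$ is fine.

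For ``$\Rightarrow$'' your strategy (find $m$ with $p^{\bsf}(F_m;W)=0$ for all $W\in\overline{\QQ}$, and if not, manufacture a non-random-free limit) is the right shape, and you are right that the last step is where the work lies. There is, however, one concrete obstruction in the route you describe that you have not flagged: $\QQ$ is \emph{not assumed hereditary}. When you ``pass to the induced subgraph on an $F_m$-copy'' inside $G_m$ and then do Ramsey cleanup, the resulting graphs need not lie in $\QQ$, and there is no reason their limits should lie in $\overline{\QQ}$. Even the weaker reading---that graphs in $\QQ$ \emph{contain} rich bipartite structure as induced subgraphs---does not obviously yield a non-random-free point of $\overline{\QQ}$, because the $F_m$-copies may occupy a vanishing fraction of $G_m$, so that $p^{\bsf}(F_m;G_m)\to 0$ and the global limit of the $G_m$'s sees nothing of them.

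The Lov\'asz--Szegedy argument avoids this by working on the limit side rather than with induced subgraphs of finite graphs: one uses that on random-free graphons the cut metric and the $L^1$ metric agree, so the compact set $\overline{\QQ}$ is compact (hence totally bounded) in $\delta_1$; this total boundedness is then translated, via a regularity-type argument, into a uniform bound on the size of shattered configurations, which is exactly the statement that some fixed $F_m$ has $p^{\bsf}(F_m;W)=0$ throughout $\overline{\QQ}$. If you want to salvage your finite-graph route, you would need an additional argument guaranteeing that the bipartite structure you extract occupies a \emph{positive proportion} of some sequence in $\QQ$, not merely that it appears somewhere inside.
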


The following lemma is due to Alon,  Fischer, and Newman (See~\cite[Lemma 1.6]{MR2341924}).
\begin{lemma}\cite{MR2341924}\label{lem:Alon}
Let $k$ be a fixed integer and let $\delta > 0$ be a small real. For every graph $G$,
either there exists stepfunction graphon $W'$ with $r \le \left(\frac{k}{\delta} \right)^{O(k)}$ steps  such
that $\delta_1(W_G,W') \le \delta$, or for every bigraph $H$ on $k$ vertices
$p^b(H;G) \ge \left(\frac{\delta}{k}\right)^{O(k^2)}$.
\end{lemma}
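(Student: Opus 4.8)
The plan is to prove Lemma~\ref{lem:Alon} by a greedy partition‑refinement argument of the type used by Alon, Fischer and Newman, arranged so that the approximating partition has only $(k/\delta)^{O(k)}$ classes; it is precisely this polynomial (rather than tower‑type) bound that rules out simply invoking Szemer\'edi's regularity lemma. Fix a threshold $\gamma=(\delta/k)^{O(1)}$, the exact value to be pinned down at the end. Call an ordered pair of parts \emph{extreme} if the edge density between them lies outside $(\gamma,1-\gamma)$ and \emph{intermediate} otherwise, and say that a vertex $v$ \emph{bisects} a part $S$ if $|N(v)\cap S|/|S|\in(\gamma,1-\gamma)$.

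First I would set up the process. Maintain a partition $\PP$ of $V(G)$ (starting from the trivial one) together with the $\{0,1\}$‑valued stepfunction $W_\PP$ whose value on each block $S_i\times S_j$ is the majority value of $W_G$ there; since $W_G$ is $\{0,1\}$‑valued one has $\|W_G-W_\PP\|_1=\sum_{i\le j}\min\bigl(e(S_i,S_j),\overline e(S_i,S_j)\bigr)$, where $\overline e$ counts non‑edges, and $\delta_1(W_G,W_\PP)\le\|W_G-W_\PP\|_1$. As long as some intermediate pair $(S_a,S_b)$ admits a bisecting vertex --- say $v\in S_a$ bisects $S_b$ --- split $S_b$ into $N(v)\cap S_b$ and its complement, both of relative size at least $\gamma$, and continue; this is a \emph{refinement step}. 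If instead an intermediate pair $(S_a,S_b)$ admits no bisecting vertex on either side, then sorting the vertices of $S_a$ by whether they see almost none or almost all of $S_b$ (and symmetrically) shows that the sub‑bigraph between $S_a$ and $S_b$ is $O(\gamma)$‑close to a stepfunction with at most two steps on each side, and this pair needs no further work. The process also stops globally the moment the refinement tree (parts as nodes, each split producing two children) reaches depth $k$.

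The bookkeeping that makes the two outcomes quantitatively consistent is the core of the argument. If the refinement tree stays below depth $k$ then, its splits being binary, the final partition has at most $2^{k}\le(k/\delta)^{O(k)}$ classes; upon termination every intermediate pair of classes has been certified $O(\gamma)$‑close to a two‑step function and the extreme pairs contribute at most $\gamma$ in total to the $L^1$ defect, so (after a harmless final subdivision of the certified pairs) $\delta_1(W_G,W_\PP)\le\|W_G-W_\PP\|_1\le\delta$, which is the first alternative. If instead the refinement tree reaches depth $k$, one has a chain of $k$ nested refinements, i.e.\ a shrinking pair of regions between which the bipartite structure has been kept ``universal'' through $k$ levels, each level supplying a vertex that splits the next level into pieces of relative size at least $\gamma$; placing the vertices of an arbitrary bigraph $H$ on $k$ vertices at the appropriate nodes of this structure realises every prescribed adjacency and non‑adjacency, and the probability that a random placement succeeds is a product of $O(k^2)$ factors, each at least $\gamma$, so $p^{\bsf}(H;G)\ge\gamma^{O(k^2)}=(\delta/k)^{O(k^2)}$ --- the second alternative.

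I expect the main obstacle to be the ``no bisecting vertex'' analysis together with the quantitative control: one must check that a pair of parts with no robustly bisecting vertex on either side really is $O(\gamma)$‑close to a two‑step function, that reaching depth $k$ in the refinement tree genuinely produces a cell structure universal for \emph{all} $k$‑vertex bigraphs --- including control of the adjacencies among the embedded vertices themselves, not merely their adjacencies to the splitting vertices, and careful tracking of which side each splitting vertex lies on --- and, most delicately, that each refinement loses only a polynomial factor, so that depth $k$ and bounded branching keep the partition size at $(k/\delta)^{O(k)}$. This is exactly where one cannot afford to black‑box a regularity lemma and must run the defect/energy increment by hand. Once these pieces are in place, choosing $\gamma$ a small fixed power of $\delta/k$ and assembling the dichotomy is a routine, if tedious, verification of the hidden constants.
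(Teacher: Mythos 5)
The paper does not actually prove this lemma: it is imported verbatim as Lemma~1.6 of Alon, Fischer and Newman \cite{MR2341924}, so there is no in-paper proof to compare against and your sketch has to stand on its own against the known argument. It does not. The fatal step is the claim that once your bisection tree reaches depth $k$ you obtain a structure in which ``placing the vertices of an arbitrary bigraph $H$ on $k$ vertices at the appropriate nodes realises every prescribed adjacency and non-adjacency.'' A chain of $k$ nested bisections certifies nothing of the sort. Take $G$ to be the half-graph on $[n]$ (with $i\sim j$ iff $i+j>n$) and $n>2^{k}$: at every stage the pair (lower half, current upper interval) is intermediate and the midpoint vertex bisects it, so your process happily runs to depth $k$; yet the half-graph contains \emph{no} induced copy of the $2\times 2$ matching bigraph ($a\sim c$, $b\sim d$, $a\not\sim d$, $b\not\sim c$ forces $c>d$ and $d>c$), so $p^{\bsf}(H;G)=0$ for that $H$ and your second alternative is simply false for this $G$ even though the lemma itself holds for it via the first alternative. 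The actual AFN proof needs a much stronger non-approximability certificate than ``some vertex bisects some part'': roughly, a large set of vertices whose neighbourhoods within the relevant part are pairwise $\gamma$-far, with this property maintained inductively on \emph{both} halves of every split; it is that two-sided recursive richness, not mere depth of the refinement tree, that forces every $k$-vertex bigraph to appear with density $(\delta/k)^{O(k^2)}$.

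Two further points would need repair even if the main gap were fixed. First, your termination analysis has no potential (index/energy) function: you cap the tree at depth $k$ by fiat, which is exactly what creates the false dichotomy above; in a correct argument the recursion depth is tied to $k$ because each level consumes one vertex of $H$, not because the refinement is artificially truncated. Second, the ``harmless final subdivision'' is not harmless as bookkeeping: splitting each of the $\le 2^{k}$ parts into its high/low classes with respect to every other part and taking the common refinement can produce on the order of $2^{2^{k}}$ classes, which is not of the promised form $(k/\delta)^{O(k)}$ (a polynomial in $1/\delta$ of degree $O(k)$) without further argument. I would recommend either citing \cite{MR2341924} as the paper does, or reworking the proof around their actual induction on $k$ with the ``many pairwise-far neighbourhoods'' certificate.
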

Every random-free graphon $W$ can be approximated arbitrarily well in the $\delta_1$ distance with $W_G$ for some graph $G$, and furthermore,
for every fixed $H$, the function $p^\bsf(H,\cdot)$ is continuous in the $\delta_1$ distance.
Thus Lemma~\ref{lem:Alon} can be generalized to random-free graphons.
\begin{corollary}\label{c:AlonGraphon}
Let $k$ be a fixed integer and let $\delta > 0$ be a small real. For every
random-free graphon $W$, either there exists a stepfunction  graphon $
W'$ with $r \le \left(\frac{k}{\delta} \right)^{O(k)}$ steps such
that $\delta_1(W,W') \le \delta$, or for every bigraph $H$ on $k$ vertices
$p^b(H;G) \ge \left(\frac{\delta}{k}\right)^{O(k^2)}$.
\end{corollary}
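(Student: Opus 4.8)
The plan is to derive Corollary~\ref{c:AlonGraphon} from Lemma~\ref{lem:Alon} by a straightforward approximation-and-limit argument, exactly as the paragraph preceding the statement indicates. First I would use the fact that the graphs $W_G$ are dense in $\mathcal{W}_0$ in the $\delta_1$ distance (equivalently, every graphon is a limit of finite graphs): given a random-free graphon $W$ and a small parameter $\eta>0$, pick a graph $G$ with $\delta_1(W,W_G)\le\eta$. Apply Lemma~\ref{lem:Alon} to $G$ with the parameter $\delta/2$ (or any fixed fraction of $\delta$). Then one of the two alternatives of the lemma holds for $G$.

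In the first case, Lemma~\ref{lem:Alon} gives a stepfunction graphon $W'$ with $r\le(2k/\delta)^{O(k)}=(k/\delta)^{O(k)}$ steps and $\delta_1(W_G,W')\le\delta/2$. By the triangle inequality for $\delta_1$ (which is a pseudometric on graphons, being the cut/$L^1$ distance up to measure-preserving rearrangement), $\delta_1(W,W')\le\delta_1(W,W_G)+\delta_1(W_G,W')\le\eta+\delta/2\le\delta$ provided $\eta\le\delta/2$. This is the desired stepfunction approximation of $W$. In the second case, for every bigraph $H$ on $k$ vertices we have $p^{\bsf}(H;G)\ge(\delta/(2k))^{O(k^2)}=(\delta/k)^{O(k^2)}$. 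Here I would invoke the continuity of $p^{\bsf}(H;\cdot)$ in the $\delta_1$ distance for each fixed $H$: since there are only finitely many bigraphs on $k$ vertices, there is a modulus of continuity depending only on $k$, so by choosing $\eta=\eta(k,\delta)$ small enough we get $p^{\bsf}(H;W)\ge\frac12(\delta/(2k))^{O(k^2)}=(\delta/k)^{O(k^2)}$ for all such $H$. Taking $\eta$ to be the minimum of $\delta/2$ and this continuity threshold makes both cases go through simultaneously, and letting $G$ be any graph within $\eta$ of $W$ completes the argument.

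The main point to be careful about — and the only real obstacle — is the quantification of the continuity of $p^{\bsf}(H;\cdot)$: one needs that the perturbation in $p^{\bsf}$ caused by moving a distance $\eta$ in $\delta_1$ can be made smaller than the (fixed, $\delta$- and $k$-dependent) lower bound coming from Lemma~\ref{lem:Alon}, \emph{uniformly} over the finitely many $H$ on $k$ vertices and \emph{independently} of $W$. This is immediate from the standard counting-lemma-type estimate $|p^{\bsf}(H;W_1)-p^{\bsf}(H;W_2)|\le e(H)\cdot\delta_\square(W_1,W_2)\le e(H)\cdot\delta_1(W_1,W_2)$ (or the analogous bound for induced bigraph densities, which expands $p^{\bsf}$ into a signed sum of homomorphism-type densities and applies the bound termwise), so $\eta$ can be taken as a function of $k$ and $\delta$ alone. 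With that in hand the $O(\cdot)$-constants in the exponents absorb the constant factors of $2$, and the statement follows. Note the corollary as stated slightly abuses notation by writing $p^b(H;G)$ where it means $p^{\bsf}(H;W)$; I would phrase the conclusion in terms of $W$ throughout.
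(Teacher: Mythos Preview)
Your argument is correct and is exactly the approach the paper takes: the corollary is stated without a separate proof, and the preceding paragraph justifies it via precisely the two ingredients you spell out (approximation of a random-free graphon by some $W_G$ in $\delta_1$, and continuity of $p^{\bsf}(H;\cdot)$ in $\delta_1$). One small correction to your phrasing: it is \emph{not} true that the $W_G$ are $\delta_1$-dense in all of $\mathcal{W}_0$ (the constant-$\tfrac12$ graphon is at $\delta_1$-distance $\tfrac12$ from every $\{0,1\}$-valued graphon), but $\delta_1$-approximation by $W_G$ does hold for random-free graphons, which is all you actually use and all the paper claims.
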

Next we will prove two simple lemmas about  entropy.
\begin{lemma}\label{l:ApproxEnt}
Let $\mu_1$ and $\mu_2$ be two discrete probabilistic distributions on a finite set $\Omega$.
Then $$|\Ent(\mu_1) - \Ent(\mu_2)| \leq |\Omega|h\left(\frac{\|\mu_1 - \mu_2\|_1}{|\Omega|}\right).$$
\end{lemma}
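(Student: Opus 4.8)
The plan is to reduce this global $\ell_1$-continuity estimate to a one-variable pointwise bound on the function $g(x) := -x\log x$, and then to invoke concavity of the binary entropy function twice.

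First I would record the pointwise inequality: for all $a,b\in[0,1]$,
$$|g(a)-g(b)| \le h(|a-b|).$$
To prove it, set $\delta := |a-b|$ and use that $g$ is concave on $[0,1]$ with $g(0)=g(1)=0$, so that for fixed $\delta>0$ the increment $x\mapsto g(x+\delta)-g(x)$ is non-increasing on $[0,1-\delta]$. Assuming $a\ge b$ (the other case is symmetric), comparing the increment at $x=b$ with the increment at $x=0$ gives $g(a)-g(b)\le g(\delta)-g(0)=g(\delta)$, while comparing the increment at $x=b$ with the increment at $x=1-\delta$ gives $g(a)-g(b)\ge g(1)-g(1-\delta)=-g(1-\delta)$. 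Since $g\ge 0$ on $[0,1]$ and $h(\delta)=g(\delta)+g(1-\delta)$, both sides are bounded in absolute value by $h(\delta)$. Note that this is the step that forces one to use concavity at \emph{both} endpoints $0$ and $1$, precisely because $g$ is not monotone.

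Next, write $\varepsilon_\omega := |\mu_1(\omega)-\mu_2(\omega)|$ and $\varepsilon := \|\mu_1-\mu_2\|_1 = \sum_{\omega\in\Omega}\varepsilon_\omega$. Expanding the entropies as $\Ent(\mu_i)=\sum_{\omega\in\Omega} g(\mu_i(\omega))$, the triangle inequality together with the pointwise bound yields
$$|\Ent(\mu_1)-\Ent(\mu_2)| \le \sum_{\omega\in\Omega} |g(\mu_1(\omega))-g(\mu_2(\omega))| \le \sum_{\omega\in\Omega} h(\varepsilon_\omega).$$
Finally, apply Jensen's inequality to the concave function $h$ (each $\varepsilon_\omega\in[0,1]$, so their average also lies in $[0,1]$):
$$\sum_{\omega\in\Omega} h(\varepsilon_\omega) = |\Omega|\cdot\frac{1}{|\Omega|}\sum_{\omega\in\Omega} h(\varepsilon_\omega) \le |\Omega|\, h\!\left(\frac{1}{|\Omega|}\sum_{\omega\in\Omega}\varepsilon_\omega\right) = |\Omega|\, h\!\left(\frac{\varepsilon}{|\Omega|}\right),$$
which is exactly the claimed bound.

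The only genuinely non-routine ingredient is the pointwise inequality $|g(a)-g(b)|\le h(|a-b|)$; once that is in place the rest is a triangle inequality plus a single application of Jensen, and in particular no case analysis on the size of $\varepsilon$ relative to $|\Omega|$ is needed, since the right-hand side is already expressed through $h$.
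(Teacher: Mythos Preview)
Your proof is correct and follows essentially the same route as the paper: establish the pointwise bound $|g(a)-g(b)|\le h(|a-b|)$ via monotonicity of the increment $x\mapsto g(x+\delta)-g(x)$ (equivalently, concavity of $g$), then sum over $\Omega$ and apply concavity of $h$. The paper states the intermediate inequality slightly more sharply as $|g(a)-g(b)|\le\max\{g(\delta),g(1-\delta)\}$ before bounding by $h(\delta)$, but this is the same argument.
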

\begin{proof}
Define $0 \log 0 := \lim_{x \to 0} x \log x =0$. By taking the derivative with respect to $x$, for fixed
$d$ we see that $(x+d)\log(x+d)-x\log x$ is monotone for $0 \leq x \leq 1-d$. Therefore,
%By concavity of $-x \log x$, for $0 \le x_1 \le x_2 \le 1$, we have $-x_2 \log x_2 + x_1 \log x_1 \le - (x_2-x_1) \log (x_2-x_1)$.
for  $x_1,x_2 \in [0,1]$ we have
$$|x_2 \log x_2 - x_1 \log x_1| \le \max\{-|x_2-x_1| \log |x_2-x_1|,-(1-|x_2-x_1|)\log(1-|x_2-x_1|)\} \le h(|x_2-x_1|).$$
Thus
\begin{eqnarray*}
|\Ent(\mu_1) - \Ent(\mu_2)| &=& \left| \sum_{x \in \Omega} \mu_1(x) \log \mu_1(x)- \mu_2(x) \log \mu_2(x) \right| \\
&\le& \sum_{x \in \Omega} h(|\mu_1(x)-\mu_2(x)|) \le  |\Omega|h\left(\frac{\|\mu_1 - \mu_2\|_1}{|\Omega|}\right),
\end{eqnarray*}
where the last inequality is by concavity of the binary entropy function $h$.
\end{proof}

\begin{lemma}\label{l:ApproxGraphon}
Let $W_1$ and $W_2$ be two graphons, and let $\mu_1$ and $\mu_2$ be the
probability distributions on $\LL_n$ induced by $G(n,W_1)$ and $G(n,W_2)$,
respectively. Then
$$\|\mu_1 - \mu_2\|_1 \le n^2  \delta_1(W_1, W_2).$$
\end{lemma}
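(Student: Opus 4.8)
The plan is to realize the two random graphs $G(n,W_1)$ and $G(n,W_2)$ on a common probability space (a coupling) and then bound the $\ell_1$ distance of their laws by the probability that they differ. First I would pick the sample points $x_1,\dots,x_n \in [0,1]$ i.i.d.\ uniform, and then for each pair $i<j$ draw an independent uniform random variable $U_{ij} \in [0,1]$; the edge $ij$ is declared present in the first graph iff $U_{ij} \le W_1(x_i,x_j)$, and present in the second iff $U_{ij} \le W_2(x_i,x_j)$. Under this coupling, each graph individually has the correct distribution $\mu_1$, resp.\ $\mu_2$, so $\|\mu_1-\mu_2\|_1 \le 2 \Pr[G(n,W_1) \ne G(n,W_2)] \le 2\sum_{i<j}\Pr[U_{ij} \text{ separates } W_1(x_i,x_j), W_2(x_i,x_j)]$.

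For a single pair $i<j$, conditioning on $x_i,x_j$, the probability that $U_{ij}$ falls between $W_1(x_i,x_j)$ and $W_2(x_i,x_j)$ is exactly $|W_1(x_i,x_j)-W_2(x_i,x_j)|$; averaging over the uniform $x_i,x_j$ gives $\|W_1-W_2\|_1$. Summing over the $\binom{n}{2}$ pairs yields $\|\mu_1-\mu_2\|_1 \le 2\binom{n}{2}\|W_1-W_2\|_1 \le n^2 \|W_1-W_2\|_1$. Finally, since $G(n,W)$ has the same law when $W$ is replaced by $W \circ \sigma$ for any measure-preserving $\sigma$ (each $x_i \mapsto \sigma(x_i)$ is again uniform and the $\sigma(x_i)$ remain i.i.d.), we may replace $\|W_1-W_2\|_1$ by $\|W_1 - W_2\circ\sigma\|_1$ for the optimal $\sigma$, i.e.\ by $\delta_1(W_1,W_2)$ (or pass to the infimum at the end). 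This gives the claimed bound $\|\mu_1-\mu_2\|_1 \le n^2\,\delta_1(W_1,W_2)$.

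I do not anticipate a serious obstacle here; the only point requiring a little care is the measurability and the justification that the monotone coupling indeed produces the marginal laws $\mu_1,\mu_2$ — this is immediate from the definition of $G(x_1,\dots,x_n,W)$ given in the preliminaries, where edge $ij$ is included with probability $W(x_i,x_j)$ independently. One should also note the elementary total-variation inequality $\|\mu_1-\mu_2\|_1 \le 2\Pr[X\ne Y]$ for any coupling $(X,Y)$ of $\mu_1,\mu_2$ on a discrete space, which handles the passage from laws to the coupling. The factor $2\binom{n}{2} = n^2 - n \le n^2$ absorbs the constant cleanly.
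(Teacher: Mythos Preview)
Your proposal is correct and follows essentially the same route as the paper: couple $G(n,W_1)$ and $G(n,W_2)$ through common uniform points $x_1,\dots,x_n$, bound the $\ell_1$ distance of the laws by the probability the coupled graphs differ, and then union-bound over edges to get $n^2\|W_1-W_2\|_1$. You make two things explicit that the paper leaves implicit --- the monotone edge-coupling via the auxiliary uniforms $U_{ij}$, and the passage from $\|W_1-W_2\|_1$ to $\delta_1(W_1,W_2)$ via invariance of the law of $G(n,W)$ under precomposition with a measure-preserving $\sigma$ --- but the argument is the same.
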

\begin{proof}
Let $x_1,\ldots,x_n$ be i.i.d. uniform random variables with values in $[0,1]$. Note
\begin{eqnarray*}
\|\mu_1 - \mu_2\|_1 &\le& \Pr \left[ G(x_1,\ldots,x_n,W_1) \neq G(x_1,\ldots,x_n,W_2) \right] \\
&\le& \Ex \left[\sum_{i \neq j} |W_1(x_i,x_j)-W_2(x_i,x_j)| \right] \le n^2 \|W_1-W_2\|_1.
\end{eqnarray*}

\end{proof}

\begin{proof}[Proof of Theorem~\ref{t:EntRandFree}]
Since $\QQ$ is random-free, by Theorem~\ref{LS-Hfree}, there exists a bigraph $H$ such that
$p^{\bsf}(H;W)=0$ for all $W \in \overline{\QQ}$. Applying Corollary~\ref{c:AlonGraphon} with $\delta=1/n^5$ shows that there exists
a stepfunction graphon $W'$ with $n^{O(1)}$ steps satisfying $\|W-W'\|_1 \le \delta$.
Then since $|\LL_n| \le 2^{n^2}$, Lemmas~\ref{l:ApproxEnt} and~\ref{l:ApproxGraphon} imply
\begin{align*}
| \Ent(G(n,W'))&-\Ent(G(n,W))| \leq  2^{n^2}h\left(\frac{n^2\delta}{2^{n^2}}\right)\\
&=-2^{n^2} \left(\frac{n^2\delta}{2^{n^2}} \log\left(\frac{n^2\delta}{2^{n^2}} \right)+ \left(1-\frac{n^2\delta}{2^{n^2}}\right) \log\left(1-\frac{n^2\delta}{2^{n^2}} \right) \right)\\
&\leq n^4\delta+n^2\delta(-2\log n -\log{\delta}) + 2^{n^2}\cdot 2 \frac{n^2\delta}{2^{n^2}}=o(1).
\end{align*}
Since $W'$ is random-free and it has $n^{O(1)}$ steps, $|\supp(G(n,W'))| = n^{O(n)}$. Consequently
$\Ent(G(n,W'))=O(n \log{n})$.
\end{proof}

\section{Concluding remarks}

\noindent {\bf 1.} Note that if $W$ is a random-free stepfunction, then $\Ent(G(n,W)) =O(n)$.
In~\cite{Janson} it is conjectured that the converse is also true. That is
$\Ent(G(n,W))=O(n)$ if and only if $W$ is equivalent to a random-free   stepfunction. The following simple example
disproves this conjecture.

Let $\mu$ be the probability distribution on $\mathbb{N}$ defined by $\mu(\{i\})=2^{-i}$.
Consider the random variable $X=(X_1,\ldots,X_n) \in \mathbb{N}^n$ where $X_i$ are i.i.d. random variables with distribution $\mu$.
We have $\Ent(X_i)=\sum_{i=1}^\infty 2^{-i} i = 2$. Hence $\Ent(X) \le \sum \Ent(X_i) = 2n$.

Partition $[0,1]$ into intervals $\{I_i\}_{i=1}^\infty$ where $|I_i|=2^{-i}$. Let $W$ be the graphon that is constant $1$ on $\cup_{i=1}^\infty
I_i \times I_i$ and $0$ everywhere else. Note that $$\Ent(G(n,W)) \le \Ent(X) \le 2n.$$
Therefore $G(n,W)$ has linear entropy.

It remains to verify that $W$ is not equivalent to a stepfunction. This follows immediately from the fact that $W$ has infinite rank as a kernel. It can also be verified in a more combinatorial way: A \emph{homogenous set} of vertices in a graph $H$ is a set of vertices which
are either all pairwise adjacent to each other, or all pairwise non-adjacent. If $W$ is equivalent to a step-function with $k$ steps, then every $H \in \supp(G(n,W))$  cleary contains a homogenous set of size at least $n/k$.  On the other hand, if $H \in \LL_{n^2}$ is a disjoint union of $n$ complete graphs on $n$ vertices, then the largest homogenous set in $H$ has size $n$, but $H \in \supp(G(n^2,W))$ by construction.

\vskip10pt
\noindent {\bf 2.}  Theorem~\ref{t:EntRandFree} shows that when $W$ is a limit of a random-free property, then the entropy of $G(n,W)$ is small.
However, the support of $G(n,W)$ can be comparatively large. For every $\epsilon >0$, we construct examples
for which $\log(|\supp(G(n,W))|)=\Omega(n^{2-\epsilon})$.  Note that Theorem~\ref{t:EntRandFree} implies that
$G(n,W)$ is far from being uniform on the support in these examples, as the entropy of a uniform random variable with support of size $2^{\Omega(n^{2-\epsilon})}$  is $\Omega(n^{2-\epsilon})$.

Let us now describe the construction. Let $\QQ$ be the set of graphs that do not contain $K_{t,t}$ as a subgraph.  Partition $[0,1]$ into intervals $\{S_i\}_{i=1}^\infty$ with non-zero lengths, and let $\{H_i\}_{i=1}^\infty$ be an enumeration of graphs in $\QQ$. Define $W$ to be the graphon that is $0$ on $S_i \times S_j$ for $i \neq j$, and is equivalent to $W_{H_i}$ (scaled properly) on $S_i \times S_i$. By construction $p(H;W)>0$ if $H \in \QQ$.
Thus $|\supp(G(n,W))| \geq  |\QQ_{n}|$. Since there exists $K_{t,t}$-free graphs with $n^{2-2/t}$ edges (See e.g.~\cite[p. 316, Thm. VI.2.10]{MR506522}),
we have $|\QQ_n| \ge 2^{n^{2-2/t}}$.

It remains to show that $W$ is a limit of graphs in some random-free property. Unfortunately, $W \not \in \overline{\QQ}$. We construct a larger random-free property $\QQ'$ so that $W \in \overline{\QQ'}$ as follows.

Fix a bigraph $B$, so that the corresponding graph contains $K_{t,t}$ as a subgraph and is connected. Suppose further that no two vertices of $B$ have the same neighborhood. Note that such a bigraph trivially exists. For example, one can take $B=(V_1 \cup U_1,V_2 \cup U_2,E)$ so that $V_1,U_1,V_2,U_2$ are disjoint sets of size $t$, every vertex of $V_1$ is joined to every vertex of $V_2$, and the edges between $V_1$ and $U_2$, as well as the edges between $U_1$ and $V_2$, form a matching of size $t$.  Let $\QQ' \supseteq \QQ$ be the set of graphs not  containing $B$ as an induced sub-bigraph. Then $\QQ'$ is random-free by Theorem~\ref{LS-Hfree}, as $p^{\bsf}(B,W')=0$ for every $W' \in \overline{\QQ'}$.

Let $r=|V(B)|$ and suppose that $G=G(x_1,x_2,\ldots,x_r, W)$ contains $B$ as an induced sub-bigraph. Then there exists $i$ so that
$x_1,x_2,\ldots,x_r \in S_i$, as $G$ is connected. It follows further that $G$ is an induced subgraph of $H_i$, as no two vertices of $G$ have the same neighborhood. Thus $G$ contains no $K_{t,t}$ subgraph, contradicting our assumption that $G$ contains $B$. We conclude that $\supp(G(n,W)) \subseteq \QQ'$ for every positive integer $n$. By~\cite[Lemma 2.6]{MR2274085} the sequence $\{G(n,W)\}_{n=1}^{\infty}$ converges to $W$ with probability one. Thus $W \in \overline{\QQ'}$, as desired.

\bibliographystyle{alpha}
\bibliography{HNentropy}
\end{document}